\newcommand{\IP}{{\mathbb P}}
\newcommand{\IE}{{\mathbb E}}
\newcommand{\cvlaw}{\stackrel{{ (d)}}{\longrightarrow}}
\newcommand{\cvIP}{\stackrel{{ \mathbb P}}{\longrightarrow}}
\newcommand{\eqlaw}{\stackrel{(d)}{=}}
\newcommand*\cvLdeux{\overset{L^2}{\longrightarrow}}
\newcommand{\dd}{\mathrm{d}}
\newcommand{\indep}{\perp \!\!\! \perp}
\newtheorem{theorem}{Theorem}[section]
\newtheorem{lemma}[theorem]{Lemma}
\newtheorem{proposition}[theorem]{Proposition}
\newtheorem*{theorem*}{Theorem}
\newtheorem*{lemma*}{Lemma}
\newtheorem*{proposition*}{Proposition}
\newtheorem{remark}[theorem]{Remark}
\theoremstyle{definition}
\newtheorem*{remark*}{Remark}
\newtheorem*{definition*}{Definition}
\theoremstyle{remark}
\begin{document}

\subjclass[2020]{Primary: 60F05; Secondary: 82D60, 82B44, 60G50.}
\keywords{Central Limit Theorem, directed polymer in random environment, dyadic time decoupling, disordered system.}

  \title[On the Central Limit Theorem for 2d polymers]{On the central limit theorem for the log-partition function of 2D directed polymers}
  \author{Cl\'ement Cosco and Anna Donadini}
  \address{Cl\'{e}ment Cosco,
  Ceremade, Universite Paris Dauphine, Place du Mar\'{e}chal de Lattre de Tassigny, 75775 Paris Cedex 16, France}
  \email{clement.cosco@gmail.com}
  \address{Anna Donadini,
  Dipartimento di Matematica e Applicazioni, Università degli Studi di Milano-Bicocca, via
Cozzi 55, 20125 Milano, Italy}
  \email{a.donadini@campus.unimib.it}

\begin{abstract}
The log-partition function $ \log W_N(\beta)$ of the two-dimensional directed polymer in random environment is known to converge in distribution to a normal distribution when considering temperature in the subcritical regime $\beta=\beta_N=\hat{\beta}\sqrt{\pi/\log N}$, $\hat{\beta}\in (0,1)$ (\emph{Caravenna, Sun, Zygouras, Ann. Appl. Prob. (2017)}). In this paper, we present an elementary proof of this result relying on a  decoupling argument and the central limit theorem for sums of independent random variables. The argument is inspired by an analogy of the model to branching random walks.
\end{abstract}

\maketitle

\section{Introduction}

\subsection{The model and the result}
Directed polymers in random environment describe the behaviour of a long directed chain of monomers in presence of random impurities. The model was originally introduced in \cite{huse} and received its first mathematical treatment in \cite{imbrie}. In the most common setting, the trajectory of the polymer is given by a nearest-neighbour path $(S_n)_{n\in \mathbb N}$ on the $d$-dimensional lattice, while the impurities (also called the environment) are given by i.i.d.\ centered random variables $\omega(n,x)$, $n\in \mathbb{N}, x\in \mathbb{Z}^d$  of variance one satisfying $\lambda(\beta)=\log \IE[e^{\beta \omega(n,x)}]\in(0,\infty)$ for all $\beta >0$, where $(\IP,\IE)$ denotes the associated probability measure and expectation sign.
 The \emph{polymer measure} on the path space is then defined in the Gibbsian way by
\begin{equation}
\dd P_{N,\beta}^{\omega}(S)=\frac{e^{\beta \sum_{n=1}^N\omega(n,S_n)}}{Z_N(\beta)} \dd P(S),\label{eq:measure}
\end{equation}
where $\beta>0$ is the inverse temperature, $P=P_0$ with $(P_x,E_x)$ the probability measure and expectation of the simple random walk on $\mathbb Z^d$ started at point $x\in \mathbb Z^d$, and $Z_N(\beta)=E[e^{\beta \sum_{n=1}^N \omega(n,S_n)}]$ is the normalizing constant denoted as the \emph{partition function}.
A \emph{polymer path} is a realization of $(S_n){_{n\in\mathbb{N}}}$ under the polymer measure. Defined as such, the polymer is attracted to positive values of the environment $\omega(n,x)$ and repelled by negative values. 

In high dimension ($d\geq 3$), a phase transition occurs and there exists a critical value $\beta_c>0$ such that for $\beta\in (0,\beta_c)$, the polymer remains diffusive (weak disorder),  whereas for $\beta\in (\beta_c,\infty)$ the polymer localizes in small regions where the environment is particularly attractive (strong disorder). On the other hand, in lower dimension ($d=1,2$), the polymer always localizes i.e.\ $\beta_c = 0$.
The existence of a phase transition can be  characterized via the \emph{normalized partition function}:
\begin{equation}
  W_N(\beta)=E\left[e^{\sum_{n=1}^N (\beta \omega(n,S_n)-\lambda(\beta))}\right] = Z_N(\beta) e^{-N\lambda(\beta)},
\end{equation} 
which is a mean-one, positive martingale converging almost surely to a limit $W_{\infty}(\beta)$. Weak disorder then holds when $W_{\infty}(\beta)>0$ a.s,\ while strong disorder holds when $W_{\infty}(\beta)=0$. We refer to \cite{comets_book} for an extensive review of the model and further elaboration on these phenomena.

In the recent years, there has been much focus on studying the scaling properties of the model around the critical point $\beta_c=0$, when the dimension equals one or two. To this end, one can rescale the temperature as the volume grows by letting $\beta=\beta_N\to 0$. Non-degenerate limits of $W_N(\beta_N)$ can then be obtained by choosing
\begin{equation} \label{eq:def_betaN}
  \beta_N = \frac{\hat \beta}{\sqrt{R_N}}, \text{ where } R_N=E_0^{\otimes 2}\left[\sum_{n=1}^N\mathds{1}_{S_n^1=S_n^2}\right]\sim 
  \begin{cases} c \sqrt N & \text{ if } d=1,\\
  \frac{1}{\pi} \log N & \text{ if } d=2,
  \end{cases}
\end{equation}
where $S_N^1$ and $S_N^2$ are two independent copies of the simple random walk. In dimension $d=1$, it was observed in \cite{alberts} that for all $\hat \beta >0$, $W_N(\beta_N)\cvlaw \mathcal Z_{\hat \beta}$ where $\mathcal Z_{\hat \beta}$ is the solution to the stochastic heat equation at $x=0, t=1$ with flat initial condition, which by Hopf-Cole transformation relates to the celebrated KPZ equation. These equations belong to the so-called KPZ universality class, which gathers various models sharing non-standard scaling exponents and scaling limits (c.f.\ the surveys \cite{Corwin-noticesAMS,CorwinKPZ,Quastel2015}). As for example, the random variable $\mathcal Z_{\hat \beta}$ turns out to be a rather complex object whose distribution is related to the Tracy-Widom distribution \cite{ACQ11}. 

In dimension $d=2$, the phenomenology is quite different. In this case, Caravenna, Sun and Zygouras showed in the seminal paper \cite{caravenna1} that a phase transition (reminiscent of dimension $d\geq 3$) occurs, namely:
\begin{theorem}[\cite{caravenna1,caravenna3}] \label{th:main-th}
  Assume $d=2$ and set $\beta_N$ as in \eqref{eq:def_betaN}.
  For all $\hat \beta \geq 1$ one has $W_N(\beta_N) \cvIP 0$ as $N\to\infty$, while
  \begin{equation}
    \forall \hat \beta \in (0,1),\quad \log W_N(\beta_N) \cvlaw \mathcal{N}\left(-\frac{\lambda^2}{2},\lambda^2\right), \quad \lambda^2:=\log \left( \frac{1}{1-\hat\beta^2} \right).
  \label{eq:CLT0}
  \end{equation}
\end{theorem}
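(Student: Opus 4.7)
My plan is to express $\log W_N(\beta_N)$ as a sum of approximately independent random variables indexed by dyadic time slabs, and then conclude by the Lindeberg--Feller CLT for triangular arrays. The BRW analogy suggests that in the subcritical regime each dyadic interval plays the role of a ``generation'', with fluctuations small enough to yield a Gaussian at each level and aggregating additively.

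\textbf{Setup.} Fix $K=\lfloor\log_2 N\rfloor$ and split $[1,N]$ into dyadic slabs $I_k=(2^{k-1},2^k]$, adjusting the last one to end at $N$; let $\omega^{(k)}$ be the restriction of the environment to $I_k\times\mathbb Z^2$, so the $\omega^{(k)}$ are independent across $k$. Using the Markov property at times $2^{k-1}$, one writes $W_N=\prod_{k=1}^K U_k$, where $U_k$ is a conditional partition function on slab $I_k$ starting from the polymer-measure law of $S_{2^{k-1}}$; these $U_k$ are not independent because the starting distribution depends on $(\omega^{(1)},\ldots,\omega^{(k-1)})$. To decouple, I would introduce $V_k$ defined analogously but with the polymer-measure starting distribution replaced by a \emph{fixed, disorder-independent} reference (say, the free-walk heat kernel at time $2^{k-1}$, or simply $\delta_0$). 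Then $V_k$ is measurable with respect to $\omega^{(k)}$ alone, and the family $(V_k)_{k\leq K}$ is independent.

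\textbf{Reduction and CLT.} The central analytic step is to prove
\[
\log W_N - \sum_{k=1}^K\log V_k\;\cvIP\; 0.
\]
The argument would bound $\IE[(\log U_k-\log V_k)^2]$ via a second-moment computation, exploiting both the $L^2$-boundedness of $W_N$ for $\hat\beta<1$ and the diffusive spreading of the polymer at time $2^{k-1}$, so that the per-slab errors are summable across the $K\sim \log N$ scales. Once the reduction is in place, independence of $(\log V_k)_k$ together with chaos-expansion estimates for $\mathrm{Var}(\log V_k)$ and $\IE\log V_k$ puts us in position to apply Lindeberg--Feller: one must check $\sum_k\mathrm{Var}(\log V_k)\to\lambda^2$, $\max_k\mathrm{Var}(\log V_k)\to 0$, and a Lyapunov third-moment bound. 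The centering $\sum_k\IE\log V_k\to-\lambda^2/2$ follows from the Taylor expansion $\log V_k\approx(V_k-1)-(V_k-1)^2/2$ in $L^2$ together with $\IE[V_k]=1$, and can be cross-checked using $\IE[W_N]=1$.

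\textbf{Main obstacle.} The decoupling error is the core difficulty: controlling $\log U_k-\log V_k$ uniformly in $k$ with an error summing to $o(1)$ across $\sim\log N$ slabs requires a delicate analysis of how the polymer's spatial law at time $2^{k-1}$ compares to the chosen reference law, and exploiting the homogenization of the polymer marginal at the scale $\sqrt{2^{k-1}}$. A secondary subtlety concerns matching $\sum_k\mathrm{Var}(\log V_k)$ with $\lambda^2=\sum_{j\geq 1}\hat\beta^{2j}/j$ (rather than just the first chaos $\hat\beta^2$): the higher-order corrections $\hat\beta^{2j}/j$ for $j\geq 2$ must emerge from the full chaos expansion internal to each slab, reflecting precisely the self-similar multiplicative structure that motivates the BRW analogy and that the dyadic decomposition is designed to exploit.
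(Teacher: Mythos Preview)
Your core idea matches the paper's: decouple $W_N$ into a product of independent slab partition functions and apply a CLT to the logs. Indeed, the paper's $Z_k=E_0\bigl[\exp\bigl(\sum_{n=t_{k-1}+1}^{t_k}(\beta_N\omega(n,S_n)-\lambda(\beta_N))\bigr)\bigr]$ is exactly your $V_k$ under the free-walk reference, and the $(Z_k)_k$ are independent for the reason you give. But one choice makes your route substantially harder than necessary: you take $K=\lfloor\log_2 N\rfloor$ slabs growing with $N$, whereas the paper takes a \emph{fixed} number $M$ of slabs with endpoints $t_k=\lceil N^{k/M}\rceil$ and runs the double limit $N\to\infty$ first, then $M\to\infty$.

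This dissolves what you flag as the ``main obstacle''. With $M$ fixed, the decoupling is proved at the level of the partition functions themselves, as a single $L^2$ statement: $W_N-\prod_{k\le M}Z_k\to 0$ as $N\to\infty$, via telescoping and one second-moment computation per level (local limit theorem plus the exact limit $\IE[Z_{a,b}^2]\to e^{\lambda_{a,b}^2}$). There is no need to compare $\log U_k$ with $\log V_k$ slab by slab, nor to make per-slab errors summable over $\sim\log N$ terms. After decoupling, one writes $Z_k=1+U_k$ and uses the Taylor step $\log(1+U_k)\approx U_k-\tfrac12 U_k^2$ as you propose; the quantitative input is a hypercontractive $(2+\varepsilon_0)$-moment bound $\IE[|U_k|^{2+\varepsilon_0}]\le c_{\hat\beta}M^{-(1+\varepsilon_0/2)}$, which simultaneously justifies the Taylor truncation, a quantitative CLT (Wasserstein bound) for $\sum_k U_k$, and a weak LLN for $\sum_k U_k^2$. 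Your variance-matching concern is also handled cleanly by the double limit: for fixed $k,M$ one has $\IE[U_k^2]\to e^{\lambda^2_{(k-1)/M,k/M}}-1$ as $N\to\infty$, and then a Riemann sum over $k\le M$ yields $\int_0^1 \hat\beta^2(1-\hat\beta^2 x)^{-1}\,\dd x=\lambda^2$ as $M\to\infty$, so the full series $\sum_{j\ge 1}\hat\beta^{2j}/j$ emerges automatically with no internal chaos-by-chaos bookkeeping.
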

\begin{remark}
Equation \eqref{eq:CLT0} states that log-partition function (or free energy) satisfies a central limit theorem in the weak disorder regime $\hat \beta < 1$. It is in fact the main result of the theorem, as the convergence of $W_N(\beta_N)$ to $0$ for $\hat \beta \geq 1$ can easily be inferred from \eqref{eq:CLT0}, see e.g.\ the proof of \cite[Theorem 2.8]{caravenna1}. 
\end{remark}

The proof of \eqref{eq:CLT0} first proposed in \cite{caravenna1} relied on exhibiting a log-normal structure appearing in the chaos expansion of $W_N(\beta_N)$. 
A more direct proof was later given  in \cite{caravenna3} by approaching directly $\log W_N$ by a thoroughly defined chaos polynomial in order to apply a central limit theorem for chaos expansions.

The aim of the present paper is to provide a concise and elementary proof of \eqref{eq:CLT0}, relying only on simple tools. The idea is based on the observation that $W_N(\beta_N)$ factorizes asymptotically into  a product of independent smaller-scale partition functions, so that we can apply the usual central limit theorem for independent summands to $\log W_N(\beta_N)$. 

Our main ingredient is thus Theorem \ref{th:step1}, which states that the dependence of $W_N(\beta_N)$ to the environment decorrelates on different dyadic time windows. More precisely, one has $W_N(\beta_N)\approx \prod_{k\leq M} Z_k$, where the $Z_k$ are partition functions that live on separate dyadic time intervals $]t_{k-1},t_k]$ with $t_k=N^{k/M}, k\leq M$. 
This reduces the problem to showing that $\sum_{k\leq M} \log Z_k$ is close to a normal distribution. 
The key point is that the variables $\log Z_k$ are independent by definition, hence one can appeal to the central limit theorem for sums of independent random variables. To this end, we write $Z_k=1+U_k$  and use Taylor approximation to ensure that $\sum_{k\leq M} \log Z_k\approx \sum_{k\leq M} (U_k-\frac{1}{2}U_k^2)$ (Proposition \ref{prop:step2}). Then, we can get a good control on the $(2+\varepsilon)$ moments of $U_k$ (this is done via hypercontractivity, see Lemma \ref{lemma:fourth_moment}), so that the latter sum is easily shown to be close in distribution to $\mathcal N(-\lambda^2/2,\lambda^2)$  (Lemma \ref{lemma:wass} and Lemma \ref{lemma:trunc}).

Finally, we mention the work \cite{DG22} where ideas with a similar spirit have been used in the more general context of non-linear 2-dimensional stochastic heat equation.

\subsection{Connection to branching random walks and related works}
The idea behind the decoupling argument (Theorem \ref{th:step1}) originates from the analogy between two-dimensional polymers (in the regime $\hat \beta < 1$) and branching random walks, see \cite{CNZ25} where this analogy has been established at the level of extreme values  statistics and \cite{cosco} for a complementary discussion on this matter. We also refer to \cite{zeitouni} for an introduction to branching random walks and their relation to log-correlated fields. In light of Theorem \ref{th:step1}, the connection to branching random walks can be described as follows. If we define $W_N(x,\beta) = E_x[e^{\sum_{n=1}^N (\beta \omega(n,S_n)-\lambda(\beta))}]$ and $Z_k(x)= E_x[e^{\sum_{n=t_{k-1}+1}^{t_{k}} (\beta \omega(n,S_n)-\lambda(\beta))}]$,
then Theorem \ref{th:step1} suggests that the rescaled field
\begin{equation}\label{eq:Fieldh_N}
h_N(x) = \sqrt{\log N}\left(\log W_N\left(x\sqrt N,\beta_N\right)-\IE[\log W_N(\beta_N)]\right),
\end{equation}
can be approximated by 
$M_n(x)=\sum_{k=1}^M X_k(x)$, where $X_k(x) = \sqrt{\log N} (\log Z_k(x \sqrt N)-\IE[\log Z_k])$.
Heuristically, one can think of $M_n(x)$ as a branching random walk where the increments are given by the $X_k(x)$'s, which are independent variables in $k$ satisfying that $X_k(x)$ and $X_k(y)$ are close for small $|x-y|$ and decorrelate when $|x-y|$ is large. (in this analogy, the root of the tree would correspond to the level $k=M$ while leaves to the level $k=1$).   
We emphasize that the limiting variance of the increments $X_k(x)$ is strictly increasing in $k$ (see e.g.\ Theorem \ref{thm:1}),
hence, the asymptotic behavior of $h_n(x)$ should be  dictated by a model of branching random walk with decreasing variance, such as \cite{MZ12}. In fact, this heuristic has been confirmed in \cite{CNZ25} regarding extreme values of the field $h_n(x)$.
Interestingly, branching random walks with decreasing variance exhibit a different behavior, leading to different characteristic exponents, in comparison to their constant-variance counterpart, see \cite{CNZ25,MZ12} for precise statements.
 
We mention that in the regime $\hat \beta < 1$, one can prove the convergence of $h_N$ in \eqref{eq:Fieldh_N} as a field towards  a Gaussian log-correlated field. This has been shown under different strengths of generality, for continuous or discrete models, in a series of works \cite{ChDu18,caravenna2,Gu18KPZ2D,NaNa21}. It would be interesting to see if our techniques could as well lead to an elementary approach to this question.

The behavior at the critical point $\hat \beta = 1$ is quite different and of upmost interest. The quantity $\mathcal Z_N(x) = W_N(x\sqrt N)-1$ (which then converges pointwise to $-1$) was first shown by Bertini and Cancrini \cite{bertini_cancrini} to admit a non-trivial limiting variance after integration against test functions. Higher moments were later studied in \cite{CaSuZyCrit18,GQT,Chen24}, culminating with the recent breakthrough paper by Caravenna, Sun and Zygouras \cite{CaSuZyCrit21} that establishes the convergence in distribution of $\mathcal Z_N$. We refer to this paper for more information about this regime.

\subsection{Notation}
Throughout the proof, we always assume $\beta= \beta_N$ given in \eqref{eq:def_betaN}. In particular, we use the short-notations $W_N=W_N(\beta_N) = E_0\left[e^{ \sum_{n=1}^N (\beta_N \omega(n,S_n) - \lambda(\beta_N))}\right]$. We also use repeatedly that $S_n^2-S_n^1 \eqlaw S_{2n}$.

\subsection{Acknowledgments} The first author is deeply grateful to Francis Comets for suggesting to look at this question through one of his thoughtful stochastic calculus approaches. Although the present strategy differs slightly from his original plan, we hope that it would still have been to his taste. We are also very grateful to Ofer Zeitouni for discussions which lead to Theorem \ref{th:step1} and to Alberto Chiarini, Francesco Caravenna, Francesca Cottini and Shuta Nakajima for their careful reading and precious advice. 

\section{The decoupling argument}
In the following, we set for all $a,b\in[0,1]$ with $a\leq b$,
\begin{equation} \label{eq:defZab}
  Z_{a,b} = Z_{a,b,N} = E\left[e^{\sum_{n=\lceil N^a\rceil+1}^{\lceil N^b\rceil} (\beta_N \omega(n,S_n)-\lambda(\beta_N))}\right].
\end{equation}
(When $a=0$, we replace $\lceil N^a \rceil$ by $0$).

\begin{theorem}[Convergence of the second moment]\label{thm:1}
We have:
$\lim_{N\to\infty}\IE\left[Z_{a,b}^2\right]= e^{\lambda_{a,b}^2}$, 
where $\lambda_{a,b}^2:=\log \left( \frac{1- a\hat \beta^2}{1- b\hat \beta^2} \right)$. In particular, $\lim_{N\to\infty} \IE[W_N^2] = e^{\lambda^2}$.
\end{theorem}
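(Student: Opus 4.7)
The plan is to derive an explicit chaos-type expansion of $\IE[Z_{a,b}^2]$ and then identify its limit term by term. Introducing two independent copies $S^1, S^2$ of the simple random walk and using the independence of $(\omega(n,\cdot))_n$ across time, the standard computation gives
\begin{equation*}
\IE[Z_{a,b}^2] = E^{\otimes 2}\left[\prod_{n=\lceil N^a\rceil + 1}^{\lceil N^b\rceil}\bigl(1 + \sigma_N^2\, \mathds{1}_{S_n^1 = S_n^2}\bigr)\right], \qquad \sigma_N^2 := e^{\lambda(2\beta_N) - 2\lambda(\beta_N)} - 1.
\end{equation*}
Expanding the product over subsets of collision times and applying the Markov property together with the identity $S_n^1 - S_n^2 \eqlaw S_{2n}$ yields
\begin{equation*}
\IE[Z_{a,b}^2] = \sum_{k\geq 0} \sigma_N^{2k}\, A_k^{(a,b)}, \qquad A_k^{(a,b)} := \sum_{\lceil N^a\rceil < n_1 < \cdots < n_k \leq \lceil N^b\rceil} q_{n_1}\prod_{i=2}^k q_{n_i - n_{i-1}},
\end{equation*}
where $q_n := P(S_{2n}=0)$.

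Next, I identify the limit of each term. By the local CLT for the 2D SRW, $q_n \sim 1/(\pi n)$, hence $R_M := \sum_{n=1}^M q_n \sim (\log M)/\pi$ and $R_{\lceil N^c\rceil}/R_N \to c$ for every $c \in (0,1]$. A Taylor expansion of $\lambda$ at $0$ gives $\sigma_N^2 \sim \beta_N^2 = \hat\beta^2 / R_N$. The heart of the argument is the termwise asymptotic
\begin{equation*}
A_k^{(a,b)} \sim (b-a)\, b^{k-1}\, R_N^k \qquad (k \geq 1),
\end{equation*}
which, combined with the above, yields $\sigma_N^{2k} A_k^{(a,b)} \to \hat\beta^{2k}(b-a)b^{k-1}$. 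To prove it, I first establish the unconstrained analogue $\tilde A_k(M) := \sum_{0 < n_1 < \cdots < n_k \leq M} q_{n_1}\prod_{i\geq 2} q_{n_i - n_{i-1}} \sim R_M^k$, via the sandwich $R_{M/k}^k \leq \tilde A_k(M) \leq R_M^k$ (upper bound by dropping $\sum m_i \leq M$ in favor of $m_i \leq M$, where $m_i = n_i - n_{i-1}$; lower bound by restricting each $m_i$ to $[1, M/k]$). Using the one-step decomposition
\begin{equation*}
A_k^{(a,b)} = \sum_{\lceil N^a\rceil < n \leq \lceil N^b\rceil} q_n\, \tilde A_{k-1}\bigl(\lceil N^b\rceil - n\bigr)
\end{equation*}
and splitting the $n$-sum at $n = N^{b-\delta}$, one has $R_{\lceil N^b\rceil - n} \sim b R_N$ in the bulk (contributing $(b-\delta-a)b^{k-1}R_N^k$) and the tail is bounded by $\delta\, b^{k-1} R_N^k$; sending $\delta \to 0$ gives the claim.

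To exchange limit and sum over $k$, I invoke dominated convergence: since $A_k^{(a,b)} \leq \tilde A_k(\lceil N^b\rceil) \leq R_N^k$ and $\sigma_N^2 R_N \to \hat\beta^2 < 1$, each term is dominated by $\rho^k$ with $\rho := (1+\hat\beta^2)/2 < 1$ for $N$ large. Summing the limits yields
\begin{equation*}
\lim_{N\to\infty} \IE[Z_{a,b}^2] = 1 + (b-a)\sum_{k\geq 1} \hat\beta^{2k}\, b^{k-1} = \frac{1-a\hat\beta^2}{1-b\hat\beta^2} = e^{\lambda_{a,b}^2},
\end{equation*}
and the case $(a,b)=(0,1)$ recovers $\lim \IE[W_N^2] = e^{\lambda^2}$. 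The main technical point is the asymptotic for $A_k^{(a,b)}$: its non-symmetric structure, caused by the isolated $q_{n_1}$ factor tied to the constraint $n_1 > \lceil N^a\rceil$, is what forces the bulk/tail splitting argument rather than a purely symmetric integration.
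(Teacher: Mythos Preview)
Your proof is correct and follows essentially the same route as the paper: the same replica computation, the same chaos expansion in the collision times, and the same asymptotics $\sigma_N^2 R_N\to\hat\beta^2$, $R_{\lceil N^c\rceil}/R_N\to c$ leading to the geometric sum $1+(b-a)\hat\beta^2/(1-b\hat\beta^2)$. The only difference is that the paper writes out the upper bound $A_k^{(a,b)}\le (R_{\lceil N^b\rceil}-R_{\lceil N^a\rceil})R_{\lceil N^b\rceil}^{k-1}$ and defers the matching lower bound to \cite{caravenna1}, whereas you give a self-contained two-sided termwise limit via the bulk/tail split and then pass to the limit by dominated convergence; this is a minor packaging improvement rather than a different argument.
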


\begin{proof}
The proof follows from the techniques of \cite[Sec.\ 3]{caravenna1}.  Let $\lambda_2(\beta):=\lambda (2\beta)-2\lambda(\beta)$. A simple computation (see e.g.\ \cite[p.36]{comets_book}) entails
\[
  \IE \left[ Z_{a,b}^2 \right] = E\left[e^{\sum_{n=\lceil N^a\rceil+1}^{\lceil N^b\rceil}\lambda_2(\beta_N)\mathds{1}_{S_{2n}=0}}\right].
\]
Let $p_n(x)=\IP_0(S_n=x)$. Since $e^{\lambda_2(\beta_N)\mathds{1}_{S_{2n}=0}}-1=\Lambda_N\mathds{1}_{S_{2n}=0}$ with $\Lambda_N=e^{\lambda_2(\beta_N)}-1$, we have
\begin{align}
\IE \left[ Z_{a,b}^2 \right] = E \left[ \prod_{n=\lceil N^a\rceil+1}^{\lceil N^b\rceil} \left(\Lambda_N\mathds{1}_{S_{2n}=0} +1\right) \right]&=1 + \sum_{k=1}^{\infty}\sum_{\lceil N^a\rceil+1\leq n_1 <\dots <n_k\leq \lceil N^b\rceil}\prod_{i=1}^{k}\Lambda_N p_{2(n_i-n_{i-1})}(0)\nonumber\\
& \leq 1 + \sum_{k=1}^{\infty}\Lambda_N^k (R_{\lceil N^b\rceil}-R_{\lceil N^a\rceil})(R_{\lceil N^b\rceil})^{k-1},\label{eq:boundOnEZ^2}
\end{align}
where $R_N = \sum_{n=1}^N p_{2n}(0)$ is defined in \eqref{eq:def_betaN}.
By the local limit theorem \cite[Sec. 1.2]{book},
\begin{equation} \label{eq:asymp_p2n}
p_{2n}(0) \sim_{N\to\infty} \frac{\pi}{n},\quad  R_{\lceil N^b \rceil}-R_{\lceil N^a \rceil} \sim_{N\to\infty} \frac{b-a}{\pi} \log N,\quad \Lambda_N\sim_{N\to\infty}\frac{\hat\beta^2\pi}{\log N},
\end{equation}
which implies that
\begin{align}
 \limsup_N \IE[Z_{a,b}^2] \leq 1+ \limsup_N \frac{\Lambda_N(R_{\lceil N^b\rceil}-R_{\lceil N^a\rceil})}{1-\Lambda_NR_{\lceil N^b\rceil}} = 1+\frac{(b-a)\hat\beta^2}{1-b\hat\beta^2} \label{eq:UBChaos} =e^{\lambda_{a,b}^2}.
\end{align}
The lower bound can be treated in a similar fashion, and we refer to \cite[Sec.\ 3]{caravenna1} for more details.

\end{proof}

Define the collection of (independent) random variables:
\begin{equation} \label{eq:defZ_k}
  Z_k = E\left[e^{ \sum_{n=t_{k-1}+1}^{t_{k}} (\beta_N\omega(n,S_n) - \lambda(\beta_N))}\right], \text{ with } t_0 = 0 \text{ and } t_k = \lceil N^{\frac{k}{M}}\rceil, 1\leq k\leq M.
  \end{equation}
(Note that $Z_k = Z_{\frac{k-1}{M},\frac{k}{M},N}$ with $Z_{a,b}$ as in \eqref{eq:defZab}.) 
\begin{theorem}[Dyadic time decoupling]
  \label{th:step1}
For all integers $M>1$ we have
\begin{equation}\label{eq:prop_step1}
W_N - \prod_{k=1}^{M} Z_k \xrightarrow[N \to \infty]{L^2} 0.
\end{equation}
\end{theorem}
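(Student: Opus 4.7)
The plan is to show $\IE[(W_N - \prod_k Z_k)^2] \to 0$ by expanding
$$\IE\bigl[(W_N - \prod_k Z_k)^2\bigr] = \IE[W_N^2] - 2\IE[W_N \prod_k Z_k] + \prod_k \IE[Z_k^2],$$
where the last term factorizes because the $Z_k$'s depend on disjoint environment windows and are hence independent. By Theorem~\ref{thm:1}, $\IE[W_N^2] \to e^{\lambda^2}$, and
$$\prod_k \IE[Z_k^2] \to \prod_{k=1}^M \frac{1-(k-1)\hat\beta^2/M}{1-k\hat\beta^2/M} = \frac{1}{1-\hat\beta^2} = e^{\lambda^2}$$
by telescoping. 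The task thus reduces to showing $\IE[W_N \prod_k Z_k] \to e^{\lambda^2}$.

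To analyze all three terms uniformly, I would use the $L^2$-orthogonal chaos expansion. Set $\xi(n,x) := e^{\beta_N \omega(n,x)-\lambda(\beta_N)}-1$, which is mean zero with $\IE[\xi^2]=\Lambda_N := e^{\lambda_2(\beta_N)}-1$ and independent across $(n,x)$. Writing $e^{\beta\omega-\lambda}=1+\xi$ gives $W_N = \sum_{I \subset [N]} W_N^I$ with $W_N^I := E[\prod_{n \in I}\xi(n, S_n)]$, and $\prod_k Z_k = \sum_I Y_N^I$ with $Y_N^I := \prod_k E^{(k)}[\prod_{n \in I \cap J_k}\xi(n, S^{(k)}_n)]$, where $J_k := (t_{k-1}, t_k]$ and the $S^{(k)}$ are independent SRWs from $0$. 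Since $\IE[\xi]=0$, the random variables $W_N^I - Y_N^I$ are pairwise $L^2$-orthogonal across distinct $I$, so
$$\IE[(W_N - \prod_k Z_k)^2] = \sum_{I \subset [N]} \IE\bigl[(W_N^I - Y_N^I)^2\bigr].$$

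The crucial observation is the following exact cancellation: when $I \subset J_{k_0}$ lies within a single window, $W_N^I = Y_N^I$ as random variables. Indeed, both reduce to $\sum_{\vec x} \IP(S_n = x_n, n \in I) \prod_{n \in I} \xi(n, x_n)$, since $S$ and $S^{(k_0)}$ share the same SRW law and $E^{(k)}[\prod_{n \in \emptyset}\xi]=1$ for $k\ne k_0$. Hence only multi-window $I$ contribute to the $L^2$ distance, and a direct computation gives $\IE[(W_N^I - Y_N^I)^2] = \Lambda_N^{|I|}(A_I - 2B_I + C_I)$, where $A_I, C_I, B_I$ denote the respective walk-coincidence probabilities for two copies of $S^{(0)}$, per-window pairs $(S^{(k,1)}, S^{(k,2)})$, and the mixed family $(S^{(0)}, S^{(k(n))})$.

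The main obstacle is the final estimate showing $\sum_{I \text{ multi-window}} \Lambda_N^{|I|}(A_I - 2B_I + C_I) \to 0$. For multi-window $I$, the three probabilities differ only in the "boundary" factors between successive windows: $A_I$ contributes the cross-boundary increment $p_{2(n^k_1 - n^{k-1}_{m_{k-1}})}(0)$ of the single walk $S^{(0)}$, whereas $C_I$ uses the restart factor $p_{2 n^k_1}(0)$ of a fresh walk from $0$. The cross probability $B_I$ can be evaluated by conditioning on $S^{(0)}$ and iterating the SRW symmetry identity $\sum_y p_a(y) p_b(x+y) = p_{a+b}(x)$, so that it interpolates between $A_I$ and $C_I$. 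By the local limit theorem $p_{2n}(0) \sim (\pi n)^{-1}$ recalled in \eqref{eq:asymp_p2n}, each of the up to $M-1$ window boundaries costs a factor of order $(\log N)^{-1}$ relative to within-window factors, so that a geometric-series estimate paralleling \eqref{eq:boundOnEZ^2}--\eqref{eq:UBChaos} yields a multi-window contribution of size $O((\log N)^{-1}) = o(1)$, which completes the proof.
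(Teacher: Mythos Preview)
Your setup via the chaos expansion is correct and elegant: the orthogonality $\IE[(W_N-\prod_k Z_k)^2]=\sum_I \IE[(W_N^I-Y_N^I)^2]$ holds, and the single-window cancellation $W_N^I=Y_N^I$ for $I\subset J_{k_0}$ is a genuine simplification. But the final step contains a real gap.

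Your claim that ``each window boundary costs a factor of order $(\log N)^{-1}$'' is false for $A_I$ and $C_I$ taken separately. Concretely, the multi-window part of $\sum_I \Lambda_N^{|I|}A_I$ does \emph{not} tend to $0$: the full sum equals $\IE[W_N^2]\to e^{\lambda^2}$, while the single-window part equals $1+\sum_k(\IE[Z_k^2]-1)\to 1+\sum_k(e^{\lambda^2_k}-1)$, and for $M\ge 2$ one has $\prod_k e^{\lambda^2_k}>1+\sum_k(e^{\lambda^2_k}-1)$ strictly (e.g.\ $(a-1)(b-1)>0$ when $M=2$). The same holds for $C_I$. Heuristically, the boundary factor $\Lambda_N p_{2n_1^k}(0)$ summed over $n_1^k\in J_k$ gives $\Lambda_N(R_{t_k}-R_{t_{k-1}})\to\hat\beta^2/M$, which is $O(1)$, not $O((\log N)^{-1})$; the smallness of $p_{2n_1^k}(0)\lesssim 1/t_{k-1}$ is exactly compensated by the length of $J_k$.

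So the entire content of the theorem lies in the cancellation coming from $B_I$, i.e.\ in showing $\IE[W_N\prod_k Z_k]\to e^{\lambda^2}$, and your treatment of $B_I$ (``interpolates via the convolution identity'') is only a heuristic, not an estimate. This is exactly the quantity the paper attacks directly: it bounds $\IE[W_N Z_1\tilde Z_1]$ from below via a three-walk representation, applies the Markov property at time $t_1$, restricts the displacement $S^1_{t_1}-S^3_{t_1}$ to a set $A_N$ of diffusive scale, and then uses the local limit theorem to compare $p_{2n_1}(x)$ with $p_{2n_1}(0)$ uniformly for $x\in A_N$ and $n_1\ge \alpha^{-3}t_1$. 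That comparison is the substantive input you are missing; your chaos decomposition repackages the problem but does not bypass it.
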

\begin{remark}
We also refer to \cite[Lemma 6.2 and eq.\ (6.3)]{caravenna3} and \cite{CZ23} where similar decompositions have already occured.
\end{remark}

\begin{proof}
Recall that $t_1 = \lceil N^{1/M}\rceil$. Fix $M>1$ and consider for all $k\leq M$:
\begin{equation*}
    \tilde{Z}_k= E\left[e^{ \sum_{n=t_{k}+1}^{N} (\beta_N\omega(n,S_n) - \lambda(\beta_N))}\right].
\end{equation*}
Note in particular that $ Z_k \indep \tilde{Z}_k$.
We first prove that 
as $N\to\infty$,
\begin{equation}\label{eq:L2_Z0}
  W_N-Z_1\tilde{Z}_1\cvLdeux 0.
\end{equation}
Since $\IE[|W_N-Z_1\tilde{Z}_1|^2]=\IE[W_N^2]-2\IE[W_NZ_1\tilde{Z}_1]+\IE[Z_1^2\tilde{Z}_1^2]$ and by Theorem \ref{thm:1},
\begin{equation}
\IE[W_N^2]\to e^{\lambda^2}\label{eq:conv_WN_step1}\quad \text{and} \quad \IE[Z_1^2\tilde{Z}_1^2]=\IE[Z_1^2]\IE[\tilde{Z}_1^2]\to
e^{\lambda^2_{0,1/M}}e^{\lambda^2_{1/M,1}}=e^{\lambda^2}, 
\end{equation}
it suffices to prove that 
\begin{equation}\label{eq:star}
  \liminf_{N\to\infty}\IE[W_NZ_1\tilde{Z}_1]\geq  e^{\lambda^2}.
\end{equation}
Considering $S^1,S^2$ and $S^3$ three independent copies of $S$, we set $
  l_n^{1,j}:= \lambda_2(\beta_N)\mathds{1}_{S_n^1=S_n^j}$ and $l_{2n}:=\lambda_2(\beta_N)\mathds{1}_{S_{2n}=0}$. We see that:
\begin{equation*}
  \begin{split}
    \IE[W_NZ_1\tilde{Z}_1]&=E^{\otimes 3}\left[ e^{\sum_{n=1}^{t_1}l_n^{1,2}}e^{\sum_{n=t_1+1}^N l_n^{1,3}}\right]\\
    &= E^{\otimes 3}\left[e^{\sum_{n=1}^{t_1}l_n^{1,2}}E_{S_{t_1}^1,S_{t_1}^3}^{\otimes 2}\left[ e^{\sum_{n=1}^{N-t_1} l_n^{1,3}}\right]\right] \\
  &\geq E^{\otimes 3}\left[ e^{\sum_{n=1}^{t_1}l_n^{1,2}}\mathds{1}_{S_{t_1}^1-S_{t_1}^3\in A_N}E_{S_{t_1}^1-S_{t_1}^3}\left[ e^{\sum_{n=1}^{N-t_1}l_{2n}}\right]\right],
   \end{split}
  \end{equation*}
where we have used Markov’s property in the second line, and where we have set
\[
A_N=\{x\in\mathbb{Z}^2 : |x|\leq \alpha^{-1}\sqrt{t_1}, |x|=2k \text{ for some } k\in\mathbb{N}\},
\]
with $|\cdot|$  the $L^1$-norm and $\alpha>0$. Hence,
we obtain that
$\IE[W_N Z_1 \tilde{Z}_1]\geq \psi_N\varphi_N$,
where
\begin{gather*}
\psi_N=E^{\otimes 3}\left[ e^{\sum_{n=1}^{t_1}l_n^{1,2}}\mathds{1}_{S_{t_1}^1-S_{t_1}^3\in A_N}\right], \qquad \varphi_N=\inf_{x\in A_N}E_x\left[ e^{\sum_{n=1}^{N-t_1}l_{2n}}\right].
\end{gather*}
Since $e^{\lambda_{0,1/M}^2}e^{\lambda_{1/M,1}^2}=e^{\lambda^2}$, property \eqref{eq:star} would then follow from:
\begin{equation}\label{eq:liminf_psi}
(i)\,\liminf_{\alpha\to 0}\liminf_{N\to \infty}\psi_N \geq e^{\lambda_{0,1/M}^2},\quad (ii)\, \liminf_{\alpha\to 0}\liminf_{N\to\infty}\varphi_N\geq e^{\lambda_{1/M,1}^2}.
\end{equation}
We now focus on these two estimates. We begin with $\psi_N$. As $\mathds{1}_A=1-\mathds{1}_A^c$, we have
\begin{equation}\label{eq:psi}
\psi_N=E^{\otimes 2}\left[ e^{\sum_{n=1}^{t_1}l_n^{1,2}}\right]-E^{\otimes 3}\left[ e^{\sum_{n=1}^{t_1}l_n^{1,2}}\mathds{1}_{S_{t_1}^1-S_{t_1}^3\not\in A_N}\right]
\end{equation}
where by Theorem \ref{thm:1} 
$E^{\otimes 2}\left[ e^{\sum_{n=1}^{t_1}l_n^{1,2}}\right] \xrightarrow[N \to \infty]{} e^{\lambda_{0,1/M}^2}$.
We thus focus on the third term of \eqref{eq:psi}. By H\"older Inequality with $p,q\geq 1$ s.t. $\frac{1}{p}+\frac{1}{q}=1$, we have
\begin{equation*}
\begin{split}
E^{\otimes 3}\left[ e^{\sum_{n=1}^{t_1}l_n^{1,2}}\mathds{1}_{S_{t_1}^1-S_{t_1}^3\not\in A_N}\right]&\leq E^{\otimes 2}\left[\left(e^{\sum_{n=1}^{t_1}l_n^{1,2}}\right)^p\right]^{\frac{1}{p}}P^{\otimes 2}\left(S_{t_1}^1-S_{t_1}^3\not\in A_N\right)^{\frac{1}{q}} \\
&=E^{\otimes 2}\left[\left( e^{\sum_{n=1}^{t_1}l_n^{1,2}}\right)^p\right]^{\frac{1}{p}}P\left(\left|S_{2t_1}\right|>\alpha^{-1}N^{\frac{1}{2M}}\right)^{\frac{1}{q}},
\end{split}
\end{equation*}
so that, using Hoeffding's inequality which entails
$
P\left(\left|S_{2t_1}\right|>\alpha^{-1}N^{\frac{1}{2M}}\right)^{\frac{1}{q}} \leq e^{-c\alpha^{-2}/q},
$ we obtain
\begin{equation}\label{eq:conv_step1}
\limsup_{\alpha\to 0}\limsup_{N\to\infty}E^{\otimes 2}\left[ e^{\sum_{n=1}^{t_1}l_n^{1,2}}\mathds{1}_{S_{t_1}^1-S_{t_1}^3\not\in A_N}\right] =0,
\end{equation}
where we have used that for $p=p(\hat\beta)> 1$ small enough, we have (see Theorem \ref{thm:1})
\begin{equation*}
\limsup_{N\to\infty}E^{\otimes 2}\left[e^{p\lambda_2(\beta_N)\sum_{n=1}^{N}\mathds{1}_{S_n^1=S_n^2}}\right]^{\frac{1}{p}}<\infty.
\end{equation*} 
Hence, by \eqref{eq:psi} and \eqref{eq:conv_step1} we have proven \eqref{eq:liminf_psi}-(i).
Next, we prove \eqref{eq:liminf_psi}-(ii). For $x\in A_N$,
\begin{align}
&E_x\left[e^{\sum_{n=1}^{N-t_1}l_{2n}}\right]= 1 + \sum_{m=1}^{N-t_1}\Lambda_N^m\sum_{1\leq n_1 < \dots <n_m\leq N-t_1}E_x\left[\prod_{i=1}^m\mathds{1}_{S_{2n_i}=0}\right] \nonumber \\
&\geq 1+ \sum_{m=1}^{N-t_1}\Lambda_N^m\sum_{\alpha^{-3} t_1 \leq n_1 < \dots <n_m\leq N-t_1} p_{2n_1}(x)\prod_{i=2}^m p_{2(n_i-n_{i-1})}(0) \nonumber \\
&\overset{\text{LLT}}{\geq}1+ e^{-c\alpha}\left(\sum_{m=1}^{N-t_1}\Lambda_N^m\sum_{\alpha^{-3} t_1 \leq n_1 < \dots <n_m\leq N-t_1}\prod_{i=1}^m p_{2(n_i-n_{i-1})}(0)\right)(1+o_N(1))\label{eq:LLT_step1}\\
&\geq  e^{-c\alpha}E_0\left[e^{\sum_{\alpha^{-3}t_1}^{N-t_1}\lambda_2(\beta_N)\mathds{1}_{S_{2n}=0}}\right](1+o_N(1)), \label{eq:lastIneq}
\end{align}
where \eqref{eq:LLT_step1} holds by the local limit theorem \cite[Sec. 1.2]{book}: uniformly for all $x\in A_N$ and $n_1 \geq \alpha^{-3} t_1$,
\begin{equation*}
p_{2n_1}(x)\geq e^{-c\frac{|x|^2}{n_1}}p_{2n_1}(0) (1+o_N(1))\geq e^{-c\alpha}p_{2n_1}(0)(1+o_N(1)),
\end{equation*}
for some $c>0$. 
Moreover, for all fixed $\alpha >0$, using that for any arbitrary $\varepsilon>0$, we have $[N^{1/M + \varepsilon},N^{1-\varepsilon}]\subset [\alpha^{-3}t_1 , N-t_1]$ for $N$ large enough, we find that Theorem \ref{thm:1} yields
$\liminf_{N\to\infty} E_0[\exp\{\sum_{n=\alpha^{-3}t_1}^{N-t_1}l_{2n}\}] \geq e^{\lambda_{1/M,1}^2}$ .
Therefore, \eqref{eq:lastIneq} gives \eqref{eq:liminf_psi}-(ii).

Repeating the proof of \eqref{eq:L2_Z0}, one can show that $\forall k< M$ we have
$\tilde{Z}_{k} - Z_{k+1}\tilde{Z}_{k+1} \cvLdeux 0$.
Then, by independence of $\tilde{Z}_{k}-Z_{k+1}\tilde{Z}_{k+1}$ and $(Z_i)_{i\leq k}$ and since $\limsup_N\IE[Z_i^2]\leq e^{\lambda^2}$, one obtains
\begin{gather*}
W_N-\prod_{k=1}^{M}Z_k
=W_N-Z_1\tilde{Z}_1+Z_1(\tilde{Z}_1-Z_2\tilde{Z_2}) +\dots +\prod_{i=1}^{M-2}Z_i(\tilde{Z}_{M-1}-Z_{M-1}\tilde{Z}_{M-1})\xrightarrow[N \to \infty]{L^2} 0.
\end{gather*}
\end{proof}

\section{Estimates on the increments}

Define $U_k:=Z_k-1$. Notice that $U_k$ is, for every value of $k$, a centered random variable and that $U_1,\dots ,U_{M}$ are independent. The following lemmas give estimates on moments of the $U_k$'s.

\begin{lemma}[Variance of $U_k$]
  \label{lem:second_moment_Uk} There exists $C_{\hat \beta}>0$ such that
$
\limsup_{N\to\infty} \sup_{k\leq M}\IE\left[ U_k^2\right] \leq  \frac{C_{\hat \beta}}{M}$.
\end{lemma}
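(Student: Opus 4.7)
The plan is to directly compute the limit of $\IE[U_k^2]$ using Theorem \ref{thm:1} and then extract a uniform bound in $k \leq M$.

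First, I would observe that $\IE[Z_k] = 1$ (by Fubini and the definition of $\lambda(\beta_N)$), hence $U_k = Z_k - 1$ is centered and $\IE[U_k^2] = \IE[Z_k^2] - 1$. Since $Z_k = Z_{(k-1)/M, k/M}$, Theorem \ref{thm:1} applied with $a = (k-1)/M$ and $b = k/M$ yields
\begin{equation*}
  \lim_{N \to \infty} \IE[U_k^2] = e^{\lambda^2_{(k-1)/M,\, k/M}} - 1 = \frac{1 - \frac{k-1}{M}\hat\beta^2}{1 - \frac{k}{M}\hat\beta^2} - 1 = \frac{\hat\beta^2/M}{1 - \frac{k}{M}\hat\beta^2}.
\end{equation*}

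Next, I would bound this limit uniformly in $k \leq M$. Since $k/M \leq 1$ and $\hat\beta^2 < 1$, the denominator satisfies $1 - (k/M)\hat\beta^2 \geq 1 - \hat\beta^2 > 0$, so the limit is at most $\frac{\hat\beta^2}{M(1-\hat\beta^2)}$. Because for each fixed $M$ we are taking a supremum over the finite set $\{1,\dots,M\}$, the operations $\limsup_N$ and $\sup_k$ can be exchanged, giving
\begin{equation*}
  \limsup_{N \to \infty} \sup_{k \leq M} \IE[U_k^2] = \sup_{k \leq M} \lim_{N \to \infty} \IE[U_k^2] \leq \frac{\hat\beta^2}{M(1-\hat\beta^2)},
\end{equation*}
so the conclusion holds with $C_{\hat\beta} = \hat\beta^2/(1-\hat\beta^2)$.

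There is no real obstacle here: the lemma is essentially a direct corollary of Theorem \ref{thm:1}, with the key observation being that the factor $b - a = 1/M$ in the numerator of $e^{\lambda^2_{a,b}} - 1$ produces precisely the desired $1/M$ decay, while the condition $\hat\beta < 1$ prevents the denominator from blowing up as $k$ approaches $M$.
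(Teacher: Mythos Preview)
Your proof is correct and follows essentially the same approach as the paper: both use $\IE[U_k^2]=\IE[Z_k^2]-1$ together with Theorem~\ref{thm:1} and then bound $\sup_{k\leq M}\bigl(e^{\lambda^2_{(k-1)/M,k/M}}-1\bigr)$ by $C_{\hat\beta}/M$. You are simply more explicit about the constant $C_{\hat\beta}=\hat\beta^2/(1-\hat\beta^2)$ and the interchange of $\limsup_N$ with the finite supremum over $k$.
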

\begin{proof}
This follows from Theorem \ref{thm:1} as $\IE[U_k^2]=\IE[Z_k^2]-1$ and $\sup_{k\leq M} \{e^{\lambda^2_{(k-1)/M,k/M}}-1\}\leq \frac{C_{\hat \beta}}{M}$.
\end{proof}

In fact, by hypercontractivity of polynomial chaos (\cite[(3.10)]{caravenna2}), we can push this estimate to $2+\varepsilon_0$ moments of $U_k$, for some $\varepsilon_0$, as explained in the next lemma.

\begin{lemma}[Moment estimate]\label{lemma:fourth_moment}
There exist $\varepsilon_0=\varepsilon_0(\hat \beta)\in (0,1)$ and $c_{\hat \beta} <\infty$ such that $\forall M>0$:
\begin{equation}\label{eq:fourth_moment}
  \limsup_{N\to\infty} \sup_{k\leq M} \IE[\left|U_k\right|^{2+\varepsilon_0}]\leq \frac{c_{\hat \beta}}{M^{1+\frac{\varepsilon_0}{2}}}.
\end{equation}
\end{lemma}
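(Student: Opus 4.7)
The strategy is to expand $U_k=Z_k-1$ as an orthogonal polynomial chaos in i.i.d.\ centered variables, control the $L^2$ norm of each chaos using the method of Theorem~\ref{thm:1}, upgrade each bound to an $L^{2+\varepsilon_0}$ bound by hypercontractivity, and sum the results as a convergent geometric series. Concretely, set $\eta(n,x):=e^{\beta_N\omega(n,x)-\lambda(\beta_N)}-1$, which are i.i.d.\ and centered with $\IE[\eta(n,x)^2]=\Lambda_N$. Expanding $\prod_n(1+\eta(n,S_n))$ inside $Z_k=E[\,\cdot\,]$ yields the chaos decomposition $U_k=\sum_{m\geq 1}U_k^{(m)}$ with
\begin{equation*}
U_k^{(m)}=\sum_{t_{k-1}<n_1<\dots<n_m\leq t_k}\,\sum_{x_1,\dots,x_m\in\Z^2}\Big(\prod_{i=1}^m p_{n_i-n_{i-1}}(x_i-x_{i-1})\Big)\prod_{i=1}^m\eta(n_i,x_i),
\end{equation*}
where $(n_0,x_0):=(0,0)$. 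The $U_k^{(m)}$ are pairwise $L^2$-orthogonal, and the same bookkeeping as in Theorem~\ref{thm:1} yields $\IE[(U_k^{(m)})^2]=\Lambda_N^m\sum_{t_{k-1}<n_1<\dots<n_m\leq t_k}\prod_i p_{2(n_i-n_{i-1})}(0)\leq \Lambda_N^m(R_{t_k}-R_{t_{k-1}})R_{t_k}^{m-1}$. Invoking the asymptotics \eqref{eq:asymp_p2n} and using $k\leq M$, this is bounded by $C_{\hat\beta}\,\hat\beta^{2m}/M$ as $N\to\infty$, uniformly in $k\leq M$.

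Next, I apply the hypercontractive inequality \cite[(3.10)]{caravenna2} to each homogeneous chaos $U_k^{(m)}$: for every $p>2$ there is a constant $c_p=c_p(\hat\beta)$, depending only on the law of $\omega$ and tending to $1$ as $p\to 2^+$, such that $\|U_k^{(m)}\|_p\leq c_p^{\,m}\|U_k^{(m)}\|_2$. The exponential moment assumption $\lambda(\beta)<\infty$ guarantees that this constant can be chosen uniformly in $N$ for $\beta=\beta_N$ (morally because the normalized variables $\eta/\sqrt{\Lambda_N}$ have moments approaching those of $\omega$). Combining with the $L^2$ bound above via Minkowski's inequality, for any $\varepsilon>0$:
\begin{equation*}
\|U_k\|_{2+\varepsilon}\leq \sum_{m\geq 1}\|U_k^{(m)}\|_{2+\varepsilon}\leq \sum_{m\geq 1}c_{2+\varepsilon}^{\,m}\|U_k^{(m)}\|_2\leq \frac{C_{\hat\beta}^{1/2}}{\sqrt M}\sum_{m\geq 1}(c_{2+\varepsilon}\hat\beta)^m.
\end{equation*}

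Since $\hat\beta<1$ and $c_p\to 1$ as $p\to 2^+$, I can fix $\varepsilon_0=\varepsilon_0(\hat\beta)\in(0,1)$ small enough that $c_{2+\varepsilon_0}\hat\beta<1$; at that point the geometric series converges, yielding $\|U_k\|_{2+\varepsilon_0}\leq C'_{\hat\beta}/\sqrt M$ uniformly in $k$, and hence $\IE[|U_k|^{2+\varepsilon_0}]\leq c_{\hat\beta}/M^{1+\varepsilon_0/2}$, as desired. The main delicate point is this last step: it crucially relies on the fact that the hypercontractivity constant tends to $1$ as $p\to 2^+$, which is what makes it possible to absorb the factor $\hat\beta$ from the chaos bound and close the series in the subcritical regime. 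This is a classical feature of hypercontractivity (e.g.\ $c_p=\sqrt{p-1}$ in the Rademacher case) and is precisely the uniform-in-$N$ form in which \cite[(3.10)]{caravenna2} is stated.
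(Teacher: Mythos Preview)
Your argument is correct and follows essentially the same route as the paper: chaos expansion of $U_k$, $L^2$ control of each homogeneous component via the bound \eqref{eq:boundOnEZ^2}, hypercontractivity \cite[(3.10)]{caravenna2} to pass to $L^{2+\varepsilon_0}$, and the choice of $\varepsilon_0$ so that $c_{2+\varepsilon_0}\hat\beta<1$ closes a geometric series. The only cosmetic difference is that the paper invokes the summed form of the hypercontractive estimate directly on $\sum_m X_m^{(N)}$, obtaining $\|U_k\|_{2+\varepsilon}^2\leq \sum_m c_\varepsilon^{2m}\|X_m^{(N)}\|_2^2$, whereas you apply hypercontractivity termwise and then Minkowski; both yield the same conclusion.
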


\begin{proof}
Similarly to the line above \eqref{eq:boundOnEZ^2}, one can write $U_k = \sum_{m=0}^\infty X_m^{(N)}$, 
where $X_0^{(N)}=0$ and \[ X_m^{(N)}:=\sum_{t_k+1\leq n_1 <\dots <n_m\leq t_{k+1}}E\left[\prod_{i=1}^m\left( e^{\beta_N\omega(n_i,S_{n_i})-\lambda(\beta_N)}-1 \right)\right].\]
Now we notice that the {$X_m^{(N)}$'s are multilinear polynomials of degree $m$ in the centered i.i.d.\ variables $\xi_{n_i}^{(N)}:=\Lambda_N^{-1/2}\left( e^{\beta_N\omega(n_i,S_{n_i})-\lambda(\beta_N)}-1 \right)$ with unitary variance (see \cite[eq. (2.16)]{caravenna2} for an analogous definition)}. Thus, we can apply hypercontractivity for polynomial chaos (see \cite[eq. (3.10) and Appendix B]{caravenna2}) and deduce that for every $\varepsilon >0$ there exists a constant $c_\varepsilon$ uniform in $N$ such that $c_\varepsilon\to 1$ as $\varepsilon\to 0$ and
\begin{equation*}
\begin{split}
\IE\left[ |U_k|^{2+\varepsilon} \right] &= \IE \left[ \left| \sum_{m=1}^\infty X_m^{(N)} \right|^{2+\varepsilon}\right]\leq \left( \sum_{m=1}^\infty c_\varepsilon^{2m}\IE\left[ (X_m^{(N)})^2 \right] \right)^{1+\frac{\varepsilon}{2}}\\
&=\left(\sum_{m=1}^\infty c_\varepsilon^{2m} \sum_{t_k+1\leq n_1 <\dots <n_m \leq t_{k+1}}\IE\left[\prod_{i=1}^m\left( \Lambda_N \mathds{1}_{S_{2(n_i-n_{i-1})}=0} \right)\right] \right)^{1+\frac{\varepsilon}{2}}.
\end{split}
\end{equation*}
At this point we can choose $\varepsilon_0=\varepsilon_0(\hat\beta)\in (0,1)$ such that $c_{\varepsilon_0}\hat\beta <1$ and conclude, by \eqref{eq:boundOnEZ^2} that
\begin{equation*}
\IE\left[ |U_k|^{2+\varepsilon_0} \right] \leq \left( \frac{c_{\varepsilon_0}^2\Lambda_N(R_{t_{k+1}}-R_{t_k})}{1-c_{\varepsilon_0}^2\Lambda_N R_N} \right)^{1+\frac{\varepsilon_0}{2}}
\leq \frac{c_{\hat\beta}}{M^{1+\frac{\varepsilon_0}{2}}},
\end{equation*}
for $N$ large enough, where the second inequality is uniform in $k\leq M$ and is obtained using \eqref{eq:asymp_p2n}.
\end{proof}

From this point on, we fix $\varepsilon_0 \in (0,1)$ such that \eqref{eq:fourth_moment} holds. Recall that $Z_k=U_k+1$.
\begin{proposition}\label{prop:step2} For all $\delta >0$,
\begin{equation}\label{eq:prop_step2}
\limsup_{M\to\infty}\limsup_{N\to\infty}\IP\left(\left|\sum_{k=1}^{M}\log(1+U_k)-\sum_{k=1}^{M}\left(U_k-\frac{U_k^2}{2}\right)\right|\geq\delta\right)=0.
\end{equation}
Moreover,
\begin{equation} \label{eq:new}
\limsup_{M\to\infty}\limsup_{N\to\infty} \IP\left(\left|\log W_N - \sum_{k=1}^M \log(1+U_k)\right| \geq \delta \right) =0.
\end{equation}
\end{proposition}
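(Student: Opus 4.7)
My plan for \eqref{eq:prop_step2} is to combine a Taylor expansion of $\log(1+x)$ around $0$ with the $(2+\varepsilon_0)$-th moment estimate from Lemma \ref{lemma:fourth_moment}. The elementary bound $|\log(1+u) - u + u^2/2| \leq |u|^3$ holds uniformly for $|u|\leq 1/2$, so I would localize on the good event $A_{N,M} := \{\max_{k\leq M}|U_k| \leq 1/2\}$. On this event I interpolate $|U_k|^3 \leq 2^{\varepsilon_0-1}|U_k|^{2+\varepsilon_0}$, which together with Lemma \ref{lemma:fourth_moment} gives
\[
\IE\Bigl[\sum_{k=1}^M |U_k|^3\,\mathds{1}_{A_{N,M}}\Bigr] \leq C_{\hat\beta}\, M \cdot M^{-1-\varepsilon_0/2} = C_{\hat\beta}\, M^{-\varepsilon_0/2},
\]
so a Markov inequality controls the deviation on $A_{N,M}$. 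For the complement I use a union bound and the same moment estimate: $\IP(A_{N,M}^c) \leq M\sup_k \IP(|U_k|>1/2) \lesssim M^{-\varepsilon_0/2} \to 0$ as $M\to\infty$. Together these yield \eqref{eq:prop_step2}.

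For \eqref{eq:new}, the $L^2$-convergence in Theorem \ref{th:step1} is not enough on its own: to upgrade multiplicative closeness of $W_N$ and $\prod_k Z_k$ into additive closeness of their logarithms, I need $\prod_k Z_k$ to stay bounded away from $0$ with high probability. I would derive this from \eqref{eq:prop_step2}: by Lemma \ref{lem:second_moment_Uk} and independence, $\IE\bigl[(\sum_k U_k)^2\bigr] = \sum_k \IE[U_k^2] \leq C_{\hat\beta}$ and $\IE\bigl[\sum_k U_k^2\bigr] \leq C_{\hat\beta}$, so $\sum_k(U_k - U_k^2/2)$ is bounded in $L^1$ uniformly in $M,N$, hence tight by Markov. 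Transferring tightness through \eqref{eq:prop_step2} shows $\sum_k \log(1+U_k)$ is tight in the iterated limit, so for any $\eta>0$ there exists $c_\eta>0$ with
\[
\limsup_{M\to\infty}\limsup_{N\to\infty}\IP\Bigl(\prod_{k=1}^M Z_k < c_\eta\Bigr) \leq \eta.
\]

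To finish, fix $\delta\in(0,1)$. On the event $\{\prod_k Z_k \geq c_\eta\}\cap\{|W_N-\prod_k Z_k|\leq c_\eta \delta/2\}$ the ratio $W_N/\prod_k Z_k$ lies in $[1-\delta/2,1+\delta/2]\subset[1/2,3/2]$, whence the standard inequality $|\log(1+r)|\leq 2|r|$ for $|r|\leq 1/2$ gives $|\log W_N - \log\prod_k Z_k| \leq 2|W_N-\prod_k Z_k|/\prod_k Z_k \leq \delta$. Theorem \ref{th:step1} forces $\IP(|W_N-\prod_k Z_k|>c_\eta\delta/2)\to 0$ as $N\to\infty$ for each fixed $M$, and the remaining complement contributes at most $\eta$ in the iterated limit; letting $\eta\to 0$ yields \eqref{eq:new}. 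The main technical point is the Taylor step for \eqref{eq:prop_step2}: since Lemma \ref{lemma:fourth_moment} provides only $(2+\varepsilon_0)$ moments, the cubic remainder must be tamed via the interpolation on the truncation event $A_{N,M}$; the order of limits is also essential because Theorem \ref{th:step1} gives $L^2$-closeness only at fixed $M$.
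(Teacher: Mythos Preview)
Your proof of \eqref{eq:prop_step2} is correct and essentially identical to the paper's: the paper also localizes on $F_M=\bigcap_k\{|U_k|<1/2\}$, bounds the Taylor remainder by $C|U_k|^{2+\varepsilon_0}$ directly (your route via $|U_k|^3\le 2^{\varepsilon_0-1}|U_k|^{2+\varepsilon_0}$ is an inessential variation), applies Markov together with Lemma~\ref{lemma:fourth_moment}, and handles $F_M^c$ by the same union bound.

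For \eqref{eq:new} your argument is valid but takes a detour the paper avoids. You establish anti-concentration of $\prod_k Z_k$ near $0$ indirectly, by first proving $L^1$-boundedness (hence tightness) of $\sum_k(U_k-U_k^2/2)$ and then transporting this through \eqref{eq:prop_step2} to $\sum_k\log Z_k$. The paper instead reuses the event $F_M$ already introduced: on $F_M$ one has the \emph{deterministic} lower bound $\prod_k Z_k=\prod_k(1+U_k)\ge 2^{-M}$, which immediately places both $\prod_k Z_k$ and $W_N$ (after intersecting with $\{|W_N-\prod_k Z_k|<2^{-(M+1)}\}$) in the region where $\log$ is $\gamma_M$-Lipschitz. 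This makes the logarithmic comparison a one-line consequence of Theorem~\ref{th:step1} and the already-proven bound on $\IP(F_M^c)$. Your approach has the minor advantage of producing a lower bound $c_\eta$ independent of $M$, but the paper's argument is shorter and needs no new ingredients.
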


\begin{proof}
Write $f(U):=\sum_{k=1}^{M}\log(1+U_k)-(U_k-\frac{1}{2}U_k^2)$ and set
$F_{M}=\bigcap_{k\leq M}\{ |U_k|<1/2 \}$. Then:
\begin{equation} \label{eq:bornefU>eps}
\IP(|f(U)|\geq \delta)=\IP(|f(U)|\geq \delta,F_{M})+\IP(|f(U)|\geq \delta,F_{M}^c).
\end{equation}
By Taylor approximation
$\left|\log(1+x)-\left(x-{x^2}/2\right)\right|\leq C |x|^{2+\varepsilon_0}$ for all $|x|\leq 1/2$, 
hence
\begin{gather}
\IP(|f(U)|\geq \delta,F_{M}){\leq}\IP\left(C\sum_{k=1}^{M}|U_k|^{2+\varepsilon_0}\geq \delta\right)
\overset{\text{Markov}}{\leq}C\delta^{-1}{\IE\left[\sum_{k=1}^{M}|U_k|^{2+\varepsilon_0}\right]}\overset{\eqref{eq:fourth_moment}}{\leq} \frac{Cc_{\hat\beta}}{\delta M^{\frac{\varepsilon_0}{2}}},\label{eq:term1}
\end{gather}
for $N$ large enough.
Furthermore, by the union bound,
\begin{equation} \label{eq:term2}
  \IP(F_{M}^c)\leq M\sup_{k\leq M}\IP(|U_k|\geq 1/2)
  \leq M2^{2+\varepsilon_0}\sup_{k\leq M}\IE\left[|U_k|^{2+\varepsilon_0}\right] \overset{\eqref{eq:fourth_moment}}{\leq} \frac{2^{2+\varepsilon_0}c_{\hat\beta}}{M^{\frac{\varepsilon_0}{2}}}.
\end{equation}
With \eqref{eq:bornefU>eps} and \eqref{eq:term1}, this entails \eqref{eq:prop_step2}. 

We turn to \eqref{eq:new}. Let $g(U) =\log W_N - \log Z$ where $Z=\prod_{k=1}^M Z_k$. We use that:
\begin{equation} \label{eq:FMWNZ}
  \IP(|g(U)|\geq \delta,F_M) \leq  \IP(|g(U)|\geq \delta,|W_N-Z| < 2^{-(M+1)}, F_M)+\IP(|W_N-Z| > 2^{-(M+1)}).
\end{equation}
Now observe that on $F_M$, we have $Z \geq 2^{-M}$ (recall that $Z_k=U_k+1$). Therefore, given that $\log$ is $\gamma_M$-Lipschitz continuous on $[2^{-(M+1)},\infty)$ for some $\gamma_M>0$, by the definition of $g(U)$ we have
\begin{equation} \label{eq:lastEstimate}
  \IP(|g(U)|\geq \delta,|W_N-Z| < 2^{-(M+1)},F_M)\leq \IP(|W_N -Z|\geq \delta\gamma_M^{-1}).
\end{equation}
Since Theorem \ref{th:step1} entails that $\limsup_N \IP(|W_N-Z|
\geq \varepsilon) = 0$ for any $\varepsilon >0$, we obtain \eqref{eq:new} by  \eqref{eq:FMWNZ}, \eqref{eq:lastEstimate} and \eqref{eq:term2}.
\end{proof}

\section{Use of the central limit theorem and law of large numbers}

In this section, we argue that $\sum_{k=1}^{M}\left( U_k-\frac{U_k^2}{2} \right)$ is well approximated in distribution by $\mathcal{N}(-\frac{\lambda^2}{2},\lambda^2)$. Below, $\mathcal W_1$ denotes the $L^1$ 
-Wasserstein distance between two measures (see Appendix \ref{appendix}).

A first important step is to establish convergence of the variance to $\lambda^2$.

\begin{lemma}\label{lemma:variance}
Let $\lambda_{M,N}^2:=\IE[\sum_{k=1}^M U_k^2]$. Then we have that
\begin{equation}\label{eq:variance}
\limsup_{M\to \infty}\limsup_{N\to\infty}|\lambda_{M,N}^2-\lambda^2| = 0.
\end{equation}
\end{lemma}

\begin{proof}
\eqref{eq:variance} follows from Theorem \ref{thm:1} and Riemann summation, which entail
\begin{align}
\IE\left[ \sum_{k=1}^MU_k^2\right] & = \sum_{k=1}^M\left( \IE\left[ Z_k^2\right]-1\right) \xrightarrow[N \to \infty]{}
\sum_{k=1}^M \frac{1}{M}\left( \frac{\hat\beta^2}{1- \hat\beta^2\frac{k+1}{M}} \right)\xrightarrow[M \to \infty]{} \int_0^1\frac{\hat\beta^2}{1-\hat\beta^2x}dx\nonumber=\lambda^2.\label{eq:second_moment}
\end{align}
\end{proof}

\begin{lemma}\label{lemma:wass}
We have that for all $M>0$,
\begin{equation}\label{eq:wass}
\limsup_{N\to\infty}\mathcal{W}_1\left(\mathcal{L}\left(\sum_{k=1}^{M}U_k\right),\mathcal{N}(0,\lambda^2)\right)\leq \frac{c_{\hat\beta}'}{\lambda_{M,N}^{\frac{1+\varepsilon_0}{2}}\,M^{\frac{\varepsilon_0}{2}}} + |\lambda_{M,N}-\lambda \,|^2.
\end{equation}
\end{lemma}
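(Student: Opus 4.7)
The plan is to apply a quantitative central limit theorem in the $\mathcal W_1$ distance to the independent sum $\sum_{k=1}^M U_k$, exploiting the $(2+\varepsilon_0)$-moment control from Lemma~\ref{lemma:fourth_moment}. The natural tool is a Lindeberg-type swap; the as-yet-unswapped Gaussians will provide the smoothing required by the merely Lipschitz test functions dual to $\mathcal W_1$.

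First, I would identify the limiting variance. Set $\sigma_{N,M}^2:=\sum_{k=1}^M \IE[U_k^2]$. By Theorem~\ref{thm:1} applied to each $Z_k$, $\lim_{N\to\infty}\sigma_{N,M}^2=\sum_{k=1}^M(e^{\lambda^2_{(k-1)/M,k/M}}-1)=\sum_{k=1}^M \frac{\hat\beta^2/M}{1-(k/M)\hat\beta^2}$, a right Riemann sum for $\int_0^1\hat\beta^2/(1-t\hat\beta^2)\,\mathrm{d}t=\lambda^2$ with error $O(1/M)$. Since $\mathcal W_1(\mathcal N(0,a^2),\mathcal N(0,b^2))=\sqrt{2/\pi}\,|a-b|$, this yields $\mathcal W_1(\mathcal N(0,\sigma_{N,M}^2),\mathcal N(0,\lambda^2))=O(1/M)$, absorbed in $1/M^{\varepsilon_0/2}$ since $\varepsilon_0<1$. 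By the triangle inequality it suffices to bound $\mathcal W_1(\mathcal L(\sum U_k),\mathcal N(0,\sigma_{N,M}^2))$.

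For this I would introduce independent centered Gaussians $G_1,\dots,G_M$, independent of $(U_k)$, with $\IE[G_k^2]=\IE[U_k^2]$, and swap $U_k$ for $G_k$ one at a time (in decreasing order of $k$). Fix a $1$-Lipschitz $\varphi$; conditioning on $U_1,\dots,U_{k-1}$ and using $T_{>k}:=G_{k+1}+\cdots+G_M$ as a Gaussian smoothing kernel of variance $s_k^2:=\sum_{j>k}\IE[U_j^2]$, the $k$-th swap reduces to comparing $\IE[\psi_k(U_k)]$ and $\IE[\psi_k(G_k)]$ for a random function $\psi_k$ inheriting the Gaussian smoothness $\|\psi_k''\|_\infty\leq C/s_k$ and $\|\psi_k'''\|_\infty\leq C/s_k^2$. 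A second-order Taylor expansion (whose linear and quadratic terms cancel upon expectation by moment matching), interpolated between $|u|^2\|\psi_k''\|_\infty$ and $|u|^3\|\psi_k'''\|_\infty$, bounds the remainder by $C|u|^{2+\varepsilon_0}/s_k^{1+\varepsilon_0}$. Combining with Lemma~\ref{lemma:fourth_moment} and the lower bound $s_k^2\geq c_{\hat\beta}(1-k/M)$, the first $M-1$ swaps contribute
\begin{equation*}
\sum_{k=1}^{M-1}\frac{C\,\IE[|U_k|^{2+\varepsilon_0}]}{s_k^{1+\varepsilon_0}}\leq \frac{C}{M^{1+\varepsilon_0/2}}\sum_{j=1}^{M-1}\Bigl(\frac{M}{j}\Bigr)^{(1+\varepsilon_0)/2}\leq \frac{C'}{M^{\varepsilon_0/2}},
\end{equation*}
the last step using $\sum_{j=1}^{M-1} j^{-(1+\varepsilon_0)/2}\leq C M^{(1-\varepsilon_0)/2}$, valid since $\varepsilon_0<1$. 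The terminal swap at $k=M$, for which $s_M=0$ offers no smoothing, is handled separately by the crude coupling bound $\mathcal W_1(U_M,G_M)\leq\sqrt{\IE[U_M^2]}+\sqrt{\IE[G_M^2]}\leq C/\sqrt M\leq C/M^{\varepsilon_0/2}$.

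The main technical obstacle is the vanishing smoothing budget as $k$ approaches $M$. If isolating the terminal swap feels ad hoc, one can instead add a common auxiliary Gaussian $\eta Z$ on both sides so that every swap benefits from smoothing of variance at least $\eta^2$, then optimize $\eta$ and pay $O(\eta)$ in $\mathcal W_1$ to remove it at the end; the exponents work out to the same rate $c'_{\hat\beta}/M^{\varepsilon_0/2}$.
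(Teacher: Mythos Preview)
Your argument is correct, but it takes a genuinely different route from the paper. The paper's proof is a one-liner: it invokes a ready-made quantitative CLT bound in Wasserstein distance due to Rio, which directly gives $\mathcal W_1\bigl(\mathcal L(\sum_k U_k),\mathcal N(0,\lambda^2)\bigr)\leq C\sum_{k=1}^M \IE[|U_k|^{2+\varepsilon_0}]$, and then applies Lemma~\ref{lemma:fourth_moment}. You instead \emph{rederive} such a bound from scratch via a Lindeberg replacement scheme, using the yet-unswapped Gaussians as the smoothing kernel and interpolating between the second- and third-derivative remainder bounds to access the fractional moment $2+\varepsilon_0$. Your computation of the sum $\sum_{k=1}^{M-1}\IE[|U_k|^{2+\varepsilon_0}]/s_k^{1+\varepsilon_0}$ is correct (the exponent $(1+\varepsilon_0)/2<1$ is exactly what makes $\sum_j j^{-(1+\varepsilon_0)/2}\asymp M^{(1-\varepsilon_0)/2}$), and your handling of the terminal swap $k=M$ (or the alternative auxiliary-Gaussian smoothing) is a legitimate way to deal with the vanishing kernel. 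You are also more explicit than the paper about the variance correction $|\sigma_{N,M}^2-\lambda^2|=O(1/M)$, which the paper absorbs silently into its citation. The trade-off is clear: the paper's approach is shorter and cleaner but relies on an external black box, while yours is self-contained and illustrates where the rate $M^{-\varepsilon_0/2}$ actually comes from, at the cost of extra bookkeeping.
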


\begin{proof}
First, we notice that
\begin{equation*}
\mathcal{W}_1\left(\mathcal{L}\left(\sum_{k=1}^{M}U_k\right),\mathcal{N}(0,\lambda^2)\right)\leq \mathcal{W}_1\left(\mathcal{L}\left(\sum_{k=1}^MU_k\right),\mathcal{N}(0,\lambda_{M,N}^2)\right) + |\lambda_{M,N}-\lambda \,|^2,
\end{equation*}
where we used that $\mathcal{W}_1$ is bounded by $\mathcal{W}_2$ and that
\begin{equation*}
\mathcal{W}_2\left(\mathcal{N}(0,\lambda_{M,N}^2),\mathcal{N}(0,\lambda^2)\right)  = |\lambda_{M,N}-\lambda \,|^2.
\end{equation*}
Then, by the estimate on the Wasserstein distance between the law of the sum of independent random variables and the law of a Gaussian stated in \cite[(1.3b)]{rio}, we have that 
\begin{equation*}
\mathcal{W}_1\left(\mathcal{L}\left(\sum_{k=1}^MU_k\right),\mathcal{N}(0,\lambda_{M,N}^2)\right)\leq \frac{C}{\lambda_{M,N}^{\frac{1+\varepsilon_0}{2}}}\sum_{k=1}^M \IE[U_k^{2+\varepsilon_0}] \overset{\eqref{eq:fourth_moment}}{\leq} \frac{c_{\hat\beta}'}{\lambda_{M,N}^{\frac{1+\varepsilon_0}{2}}\,M^{\frac{\varepsilon_0}{2}}},
\end{equation*}
for $N$ large enough, where $C>0$ is some universal constant.
\end{proof}
Next, we show that $\sum_{k=1}^M\frac{U_k^2}{2}$ converges to $\frac{\lambda ^2}{2}$ in probability.

\begin{lemma}\label{lemma:trunc}
For every $\delta>0$,
\begin{equation}\label{eq:trunc}
\limsup_{M\to\infty}\limsup_{N\to\infty}\IP\left( \left| \sum_{k=1}^M \frac{U_k^2}{2} - \frac{\lambda^2}{2}\right| >\delta\right)
=0.
\end{equation}
\end{lemma}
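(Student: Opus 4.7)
The strategy is to split the sum of interest as
\[
\sum_{k=1}^M \frac{U_k^2}{2}-\frac{\lambda^2}{2} \;=\; \frac{1}{2}\sum_{k=1}^M \bigl(U_k^2-\IE[U_k^2]\bigr) \;+\; \frac{1}{2}\Bigl(\sum_{k=1}^M \IE[U_k^2]-\lambda^2\Bigr),
\]
and handle the deterministic and the random pieces independently in the iterated limit $N\to\infty$ followed by $M\to\infty$. This is essentially a weak law of large numbers argument for the independent sum $\sum_k U_k^2$, for which the relevant moment input is already Lemma~\ref{lemma:fourth_moment}.

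For the deterministic piece, Theorem~\ref{thm:1} gives
$\lim_{N\to\infty}\IE[U_k^2] = e^{\lambda_{(k-1)/M,k/M}^2}-1 = \frac{\hat\beta^2/M}{1-k\hat\beta^2/M}$
for each fixed $k\leq M$. Summing over $k$ yields a right Riemann sum for the continuous function $t\mapsto \hat\beta^2/(1-t\hat\beta^2)$ on $[0,1]$ (well defined because $\hat\beta<1$), whose integral is $-\log(1-\hat\beta^2)=\lambda^2$. Hence $\lim_{M\to\infty}\lim_{N\to\infty}\sum_{k=1}^M\IE[U_k^2]=\lambda^2$, and this piece drops out.

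For the random piece, set $Y_k:=U_k^2-\IE[U_k^2]$, which are independent and centered. Jensen's inequality gives $\IE[U_k^2]^{1+\varepsilon_0/2}\leq \IE[|U_k|^{2+\varepsilon_0}]$, and combined with Lemma~\ref{lemma:fourth_moment} this yields, for $N$ large,
\[
\IE\bigl[|Y_k|^{1+\varepsilon_0/2}\bigr]\leq 2^{1+\varepsilon_0/2}\IE[|U_k|^{2+\varepsilon_0}]\leq \frac{C_{\hat\beta}}{M^{1+\varepsilon_0/2}}.
\]
Applying the von Bahr--Esseen inequality with exponent $p=1+\varepsilon_0/2\in(1,2)$ to the independent centered sum gives
\[
\IE\Bigl[\Bigl|\sum_{k=1}^M Y_k\Bigr|^{p}\Bigr]\leq 2\sum_{k=1}^M \IE[|Y_k|^{p}]\leq \frac{C'_{\hat\beta}}{M^{\varepsilon_0/2}},
\]
and Markov's inequality then delivers $\IP(|\sum_k Y_k|\geq \delta)\leq C'_{\hat\beta}\delta^{-p}M^{-\varepsilon_0/2}$, which vanishes as $M\to\infty$.

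The only mild obstacle is that Lemma~\ref{lemma:fourth_moment} provides $2+\varepsilon_0$ moments of $U_k$, not genuine fourth moments, so a direct Chebyshev estimate on $\mathrm{Var}(\sum_k U_k^2)$ is unavailable; the von Bahr--Esseen inequality (already the natural tool used for the Wasserstein bound in Lemma~\ref{lemma:wass}) is the correct $L^p$ substitute in the range $p\in(1,2)$. As a backup one could truncate each $U_k$ at a level $\eta_M\to 0$ chosen so that the tail probability $M\,\IP(|U_k|>\eta_M)$ vanishes while Chebyshev still applies to the truncated sum, but this only reshuffles the same $M^{-\varepsilon_0/2}$ rate.
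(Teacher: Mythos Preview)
Your argument is correct. Both you and the paper reduce the problem to (i) the deterministic convergence $\sum_k\IE[U_k^2]\to\lambda^2$, handled identically via Theorem~\ref{thm:1} and a Riemann sum, and (ii) concentration of the independent centered sum $\sum_k(U_k^2-\IE[U_k^2])$. For (ii), the paper does precisely what you sketch as the ``backup'': it truncates each $U_k^2$ at level $\alpha/M$, bounds the truncation error in $L^1$ by $c_{\hat\beta}\alpha^{-\varepsilon_0/2}$ via Lemma~\ref{lemma:fourth_moment}, applies Chebyshev to the now-bounded summands (variance of the truncated sum $\leq 2\alpha^2/M$), and sends $\alpha\to\infty$ last. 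Your route through the von Bahr--Esseen inequality is cleaner in that it dispenses with the auxiliary parameter $\alpha$ and the truncation step altogether, delivering the same $M^{-\varepsilon_0/2}$ rate in one stroke; the paper's route keeps the toolbox strictly at the Chebyshev level, consistent with its stated aim of an ``elementary'' proof. One minor inaccuracy in your commentary: the Wasserstein bound in Lemma~\ref{lemma:wass} is quoted from Rio's CLT estimate rather than from von Bahr--Esseen, but this side remark plays no role in your proof.
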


\begin{proof}
Take $\alpha>0$ and $\delta>0$.
First, we truncate the variables $U_k$ as follows:
\begin{equation}\label{eq:step3}
  \begin{split}
  \IE\left[ \left| \sum_{k=1}^MU_k^2 - \sum_{k=1}^MU_k^2 \mathds{1}_{U_k^2\leq\frac{\alpha}{M}} \right| \right] &= \IE\left[ \sum_{k=1}^M U_k^2 \mathds{1}_{U_k^2 > \frac{\alpha}{M}} \right] \leq \IE\left[ \sum_{k=1}^M\left| U_k \right|^{2+\varepsilon_0}\frac{M^{\frac{\varepsilon_0}{2}}}{\alpha ^{\frac{\varepsilon_0}{2}}} \right]\overset{\eqref{eq:fourth_moment}}{\leq} \frac{c_{\hat\beta}}{\alpha^{\frac{\varepsilon_0}{2}}},
  \end{split}
\end{equation}
This also implies that
\begin{equation}\label{eq:est_mean}
\left|\IE\left[ \sum_{k=1}^MU_k^2\right]- \IE\left[ \sum_{k=1}^MU_k^2 \mathds{1}_{U_k^2\leq\frac{\alpha}{M}} \right]\right|\leq \IE\left[ \left| \sum_{k=1}^MU_k^2 - \sum_{k=1}^MU_k^2 \mathds{1}_{U_k^2\leq\frac{\alpha}{M}} \right| \right] \overset{\eqref{eq:step3}}{\leq} \frac{c_{\hat\beta}}{\alpha^{\frac{\varepsilon_0}{2}}}.
\end{equation} 
Convergence in \eqref{eq:variance}, together with \eqref{eq:est_mean}, gives
\begin{equation}\label{eq:mean_conv_trunc}
  \limsup_{M\to\infty}\limsup_{N\to\infty}\left|\IE\left[ \sum_{k=1}^MU_k^2\mathds{1}_{U_k^2\leq\frac{\alpha}{M}}\right] - \lambda^2\right| \leq \frac{c_{\hat \beta}}{\alpha^{\frac{\varepsilon_0}{2}}}.
\end{equation}
We now conclude by proving that $ \sum_{k=1}^MU_k^2 \mathds{1}_{U_k^2\leq\frac{\alpha}{M}}$ concentrates around its mean. As the random variables $U_k^2\mathds{1}_{U_k^2\leq\frac{\alpha}{M}}-\IE\left[ U_k^2\mathds{1}_{U_k^2\leq\frac{\alpha}{M}}\right]$ are independent and centered, we have by Chebyshev's inequality:
\begin{align}
&\IP\left( \left| \sum_{k=1}^M U_k^2\mathds{1}_{U_k^2\leq\frac{\alpha}{M}}-\IE\left[ \sum_{k=1}^M U_k^2\mathds{1}_{U_k^2\leq\frac{\alpha}{M}}\right] \right|>\delta \right)\nonumber\\
&{\leq} \delta^{-2}\sum_{k=1}^M\IE \left[ \left( U_k^2\mathds{1}_{U_k^2\leq\frac{\alpha}{M}}-\IE\left[ U_k^2\mathds{1}_{U_k^2\leq\frac{\alpha}{M}}\right]\right)^2 \right]\nonumber \\
&\leq\delta^{-2}\sum_{k=1}^M \left(\IE \left[ \left( U_k^2\mathds{1}_{U_k^2\leq\frac{\alpha}{M}}\right)^2 \right] + \IE \left[  U_k^2\mathds{1}_{U_k^2\leq\frac{\alpha}{M}}\right]^2\right) \leq 2\frac{\alpha^2}{\delta^2 M}.\label{eq:prob}
\end{align}
Putting \eqref{eq:step3}, \eqref{eq:mean_conv_trunc} and \eqref{eq:prob} together, we obtain that
\begin{gather*}
  \limsup_{M\to\infty}\limsup_{N\to\infty} \IP\left( \left| \sum_{k=1}^M \frac{U_k^2}{2} - \frac{\lambda^2}{2}\right| >\delta\right) 
\leq \frac{2c_{\hat\beta}}{\delta \alpha^{\frac{\varepsilon_0}{2}}}.
\end{gather*}
As $\alpha>0$ is arbitrary, this concludes the proof.
\end{proof}

\section{Proof of Theorem \ref{th:main-th}-\eqref{eq:CLT0}}

The proof follows from Proposition \ref{prop:step2}, Lemma \ref{lemma:wass} (along with \eqref{eq:duality}) and Lemma \ref{lemma:trunc}.

\appendix
\section{The Wasserstein distance}
\label{appendix}
\setcounter{equation}{0}
\numberwithin{equation}{section}

Recall the definition of Wasserstein distance \cite{villani}, which is used in Lemma \ref{lemma:wass}.

\begin{definition*}[Wasserstein distances]
Let $(\mathcal{X},d)$ be a Polish metric space and let $p\in [1,\infty)$. For any two probability measures $\mu$ and $\nu$ on $\mathcal{X}$, the Wasserstein distance of order $p$ between $\mu$ and $\nu$ is defined by the formula
\begin{equation}\label{eq:wass_def}
\mathcal{W}_p (\mu,\nu):= \inf \left\{ \IE\left[ d(X,Y)^p \right]^{\frac{1}{p}}, \quad \mathcal{L}(X)=\mu \text{ and } \mathcal{L}(Y)=\nu \right\},
\end{equation}
where by $\IE[\cdot]$ we are referring to an expectation with respect to a coupling of $X$ and $Y$.
\end{definition*}
\begin{remark*}
We will work with $p=1$ and with $d(x,y)=|x-y|$, so that
\begin{equation}\label{eq:wass1_def}
\mathcal{W}_1(\mu,\nu) := \inf \left\{ \IE\left[ |X-Y| \right], \quad \mathcal{L}(X)=\mu \text{ and } \mathcal{L}(Y)=\nu \right\}.
\end{equation}
\end{remark*}

The following property, which is immediate, shows that weak convergence is implied by convergence in the $L^1$-Wasserstein distance:
\begin{equation}\label{eq:duality}
 \sup \left\{ \IE_{\mu}\left[ f(X) \right]-\IE_{\nu}\left[ f(Y) \right], \text{ } f \text{ is bounded and } \|f\|_{\text{Lip}}\leq 1 \right\} \leq \mathcal{W}_1(\mu,\nu).
\end{equation}

  \bibliographystyle{plain}
{\footnotesize \bibliography{polymeres-bib}

\begin{thebibliography}{10}

\bibitem{alberts}
Tom Alberts, Konstantin Khanin, and Jeremy Quastel.
\newblock The intermediate disorder regime for directed polymers in dimension
  {$1+1$}.
\newblock {\em Ann. Probab.}, 42(3):1212--1256, 2014.

\bibitem{ACQ11}
Gideon Amir, Ivan Corwin, and Jeremy Quastel.
\newblock Probability distribution of the free energy of the continuum directed
  random polymer in {$1+1$} dimensions.
\newblock {\em Comm. Pure Appl. Math.}, 64(4):466--537, 2011.

\bibitem{bertini_cancrini}
Lorenzo Bertini and Nicoletta Cancrini.
\newblock The two-dimensional stochastic heat equation: renormalizing a
  multiplicative noise.
\newblock {\em Journal of Physics A: Mathematical and General}, 31(2):615--622,
  Jan 1998.

\bibitem{caravenna3}
Francesco Caravenna and Francesca Cottini.
\newblock Gaussian limits for subcritical chaos.
\newblock {\em Electron. J. Probab.}, 27:Paper No. 81, 35, 2022.

\bibitem{caravenna1}
Francesco Caravenna, Rongfeng Sun, and Nikos Zygouras.
\newblock Universality in marginally relevant disordered systems.
\newblock {\em Ann. Appl. Probab.}, 27(5):3050--3112, 2017.

\bibitem{CaSuZyCrit18}
Francesco Caravenna, Rongfeng Sun, and Nikos Zygouras.
\newblock On the moments of the (2+1)-dimensional directed polymer and
  stochastic heat equation in the critical window.
\newblock {\em Comm. Math. Phys}, 372:385--440, 2019.

\bibitem{caravenna2}
Francesco Caravenna, Rongfeng Sun, and Nikos Zygouras.
\newblock The two-dimensional {KPZ} equation in the entire subcritical regime.
\newblock {\em Annals Probab.}, 48:1086--1127, 2020.

\bibitem{CaSuZyCrit21}
Francesco Caravenna, Rongfeng Sun, and Nikos Zygouras.
\newblock The critical 2d stochastic heat flow.
\newblock {\em Inventiones mathematicae}, 233(1):325--460, 2023.

\bibitem{ChDu18}
Sourav Chatterjee and Alexander Dunlap.
\newblock Constructing a solution of the $(2+1)$-dimensional {KPZ} equation.
\newblock {\em Annals Probab.}, 48:1014--1055, 2020.

\bibitem{Chen24}
Yu-Ting Chen.
\newblock Delta-bose gas from the viewpoint of the two-dimensional stochastic
  heat equation.
\newblock {\em To appear in Annals Probab.}, 2024.

\bibitem{comets_book}
Francis Comets.
\newblock {\em {Directed polymers in random environments. \'Ecole d'\'Et\'e de
  Probabilit\'es de Saint-Flour XLVI -- 2016.}}
\newblock Cham: Springer, 2017.

\bibitem{CorwinKPZ}
Ivan Corwin.
\newblock The {K}ardar-{P}arisi-{Z}hang equation and universality class.
\newblock {\em Random Matrices Theory Appl.}, 1(1):1130001, 76, 2012.

\bibitem{Corwin-noticesAMS}
Ivan Corwin.
\newblock Kardar-{P}arisi-{Z}hang universality.
\newblock {\em Notices Amer. Math. Soc.}, 63(3):230--239, 2016.

\bibitem{CNZ25}
Cl\'ement Cosco, Shuta Nakajima, and Ofer Zeitouni.
\newblock The maximum of the two dimensional gaussian directed polymer in the
  subcritical regime.
\newblock {\em arXiv:2503.17236}, 2025.

\bibitem{cosco}
Cl\'{e}ment Cosco and Ofer Zeitouni.
\newblock Moments of partition functions of {2D Gaussian} polymers in the weak
  disorder regime -- {I}.
\newblock {\em Comm. Math. Phys.}, 403:417--450, 2023.

\bibitem{CZ23}
Clément Cosco and Ofer Zeitouni.
\newblock Moments of partition functions of 2d gaussian polymers in the weak
  disorder regime -- {II}, 2023.

\bibitem{DG22}
Alexander Dunlap and Yu~Gu.
\newblock A forward-backward {SDE} from the 2{D} nonlinear stochastic heat
  equation.
\newblock {\em Ann. Probab.}, 50(3):1204--1253, 2022.

\bibitem{MZ12}
Ming Fang and Ofer Zeitouni.
\newblock Slowdown for time inhomogeneous branching brownian motion.
\newblock {\em Journal of Statistical Physics}, 149(1):1--9, 2012.

\bibitem{Gu18KPZ2D}
Yu~Gu.
\newblock Gaussian fluctuations of the 2d {KPZ} equation.
\newblock {\em Stoch. Partial Differ. Equ. Anal. Comput.}, 8:150--185, 2020.

\bibitem{GQT}
Yu~Gu, Jeremy Quastel, and Li-Cheng Tsai.
\newblock Moments of the 2{D} {SHE} at criticality.
\newblock {\em Probab. Math. Phys.}, 2(1):179--219, 2021.

\bibitem{huse}
David~A. Huse and Christopher~L. Henley.
\newblock Pinning and roughening of domain walls in {I}sing systems due to
  random impurities.
\newblock {\em Physical Review Letters}, 54(25):2708, 1985.

\bibitem{imbrie}
John Imbrie and Thomas Spencer.
\newblock Diffusion of directed polymers in a random environment.
\newblock {\em J. Statist. Phys.}, 52(3-4):609--626, 1988.

\bibitem{book}
Gregory~F. Lawler and Vlada Limic.
\newblock {\em Random walk: a modern introduction}, volume 123 of {\em
  Cambridge Studies in Advanced Mathematics}.
\newblock Cambridge University Press, Cambridge, 2010.

\bibitem{NaNa21}
Shuta Nakajima and Makoto Nakashima.
\newblock {Fluctuations of two-dimensional stochastic heat equation and KPZ
  equation in subcritical regime for general initial conditions}.
\newblock {\em Electronic Journal of Probability}, 28:1 -- 38, 2023.

\bibitem{Quastel2015}
Jeremy Quastel and Herbert Spohn.
\newblock The one-dimensional {KPZ} equation and its universality class.
\newblock {\em Journal of Statistical Physics}, 160(4):965--984, Aug 2015.

\bibitem{rio}
Emmanuel Rio.
\newblock {Upper bounds for minimal distances in the central limit theorem}.
\newblock {\em Annales de l'Institut Henri Poincaré, Probabilités et
  Statistiques}, 45(3):802 -- 817, 2009.

\bibitem{villani}
C\'{e}dric Villani.
\newblock {\em Optimal transport}, volume 338 of {\em Grundlehren der
  mathematischen Wissenschaften [Fundamental Principles of Mathematical
  Sciences]}.
\newblock Springer-Verlag, Berlin, 2009.
\newblock Old and new.

\bibitem{zeitouni}
Ofer Zeitouni.
\newblock Branching random walks and {G}aussian fields.
\newblock {\em Notes for Lectures}, 119, 2012.

\end{thebibliography}
}

\end{document}